\newcommand{\rul}[1]{}
\newcommand{\bhat}[1]{brown}
\newcommand{\ghat}[1]{gray}
\newcommand{\Alice}[1]{Alice}
\newcommand{\Bob}[1]{Bob}
\newcommand{\Charlie}[1]{Charlie}
\newtheorem{theorem}{Theorem}[section]
\newtheorem{lemma}[theorem]{Lemma}
\begin{document}

\title{Yet Another Hat Game}
\author{Maura B.\ Paterson\\ Department\ of Economics, Mathematics and Statistics\\
Birkbeck, University of London\\
Malet Street, London WC1E 7HX, UK\\
\and 
Douglas~R.~Stinson\thanks{research supported by NSERC discovery grant 203114-06}\\
David R. Cheriton School of Computer Science\\University of
Waterloo\\
Waterloo Ontario, N2L 3G1, Canada}
\date{\today}

\maketitle

\begin{abstract}
Several different ``hat games'' have recently received a fair amount of
attention.
Typically, in a hat game, one or more players are required to correctly guess their
hat colour when given some information about other players' hat colours.
Some versions of these games have been motivated
by research in complexity theory and have ties to well-known
research problems in coding theory, 
and some variations have led to interesting new research.

In this paper, we review {\bf Ebert's Hat Game} \cite{Ebert,EV}
which garnered a considerable amount of publicity in the late 90's and early
00's \cite{Robinson}, 
%an earlier game due to  Aspnes, Beigel, Furst and Rudich \cite{ABFR}
%which is known as the {\bf majority hats game}; 
and the
{\bf Hats-on-a-line Game} \cite{Aravamuthan,TOAD}. Then %we present a taxonomy 
%to classify different kinds of hat games. Finally, 
we introduce a new
hat game which is a ``hybrid'' of these two games
and provide an optimal strategy for playing the new game.
The optimal strategy is quite simple, but the proof involves
an interesting combinatorial argument.
\end{abstract}

\section{Introduction}

In this introduction, we review two popular hat games and mention some related work. 
In Section \ref{new.sec}, we introduce our new game and give a complete solution
for it. In Section \ref{comments.sec}, we make some brief comments.

\subsection{Ebert's Hat Game}

The following hat game was posed in a 1998 computer science PhD thesis
 by Todd Ebert \cite{Ebert}.
There are three players: 
 \Alice{15}, 
 \Bob{15}, and 
 \Charlie{15}.
 The three players enter a room and a gray  or brown  hat is placed on each person's head.
The colour of each hat is determined by a coin toss, with the outcome of one coin toss having no effect on the others.

Each person can see the {other players' hats} but not his or her own hat.
No communication of any sort is allowed, except for an {initial strategy session} before the game begins. 
Once they have had a chance to look at the other hats, the players must {simultaneously} guess the colour of their own hats, or pass. So each player's response is one of 
``\ghat{5}'',  ``\bhat{5}''  or ``{pass}''.
The group shares a hypothetical \$1,000,000 prize if {at least one player guesses correctly} and {no players guess incorrectly}.

It is not hard to devise a strategy that will win 50\% of the time.
For example, \Alice{7.5}  could guess ``\ghat{5}''  while \Bob{7.5} and \Charlie{7.5} 
pass.  Is it possible to do better? Clearly, any guess has only a 50\% chance of being correct. If more than one player guesses, then the probabilities are reduced:
the probability that {two guesses are correct} is 25\%, and the probability that
{three guesses are correct} is 12.5\%. Hence, it seems at first glance
that it is impossible to win more than 50\% of the time. 

However, suppose each player uses the following rule: If he  observes two hats of 
the {same colour} (i.e., \ghat{5} -- \ghat{5} or \bhat{5} -- \bhat{5}), then he {guesses the opposite colour}. Otherwise, when two hats of {different colours} are observed, he passes.
To analyse the probability of winning when using this strategy, 
we consider all possible distributions of hats. 
There are $2 \times 2 \times 2 = 8$ cases to consider.
In each case, we can figure out if the players win or lose. The probability of winning is equal to {the number of winning configurations
divided by eight}.
In the following Table \ref{tab.1}, 
we provide an analysis of all eight cases. Boldface type is used
to indicate correct votes.

\begin{table}[tb]
\caption{Analysis of Ebert's hat game for three players}
\label{tab.1}
\begin{center}
\begin{tabular}{ccc|ccc|c}
\multicolumn{3}{c|}{configuration} & \multicolumn{3}{c|}{guesses} & \multicolumn{1}{c}{outcome}\\ \hline
\rul{7}\bhat{5} &\bhat{5} & \bhat{5} & {\ghat{5}} & {\ghat{5}} & {\ghat{5}} & {{lose}}\\ \hline
\rul{7}\bhat{5} &\bhat{5} & \ghat{5} & & & {\bf \ghat{5}} &  {{win}}\\ \hline
\rul{7}\bhat{5} &\ghat{5} & \bhat{5} & & {\bf \ghat{5}} & & {{win}}\\ \hline
\rul{7}\bhat{5} &\ghat{5} & \ghat{5} & {\bf \bhat{5}} & & & {{win}}\\ \hline
\rul{7}\ghat{5} &\bhat{5} & \bhat{5} & {\bf \ghat{5}} & & & {{win}}\\ \hline
\rul{7}\ghat{5} &\bhat{5} & \ghat{5} & & {\bf \bhat{5}} & &  {{win}}\\ \hline
\rul{7}\ghat{5} &\ghat{5} & \bhat{5} & & & {\bf \bhat{5}} &  {{win}}\\ \hline
\rul{7}\ghat{5} &\ghat{5} & \ghat{5} & {\bhat{5}} & {\bhat{5}} & {\bhat{5}} & {{lose}}
\end{tabular}
\end{center}
\end{table}

The group wins in {six out of eight cases}, so their probability of
winning is $6/8 = 3/4 = 75\%$.
Observe that {each individual guess} is correct with a 50\% probability.
Among the eight cases, there are {six correct guesses} and 
{six incorrect guesses}.
The six correct guesses occurred in {six different cases}, while the 
six incorrect guesses were squeezed into {two cases}. 
This is why the probability of winning is much higher than 50\%, even though
each guess has only a 50\% chance of being correct!

%{Is it possible to do better?}
%For three players, the strategy is {optimal} --- it is impossible 
%to win with probability higher than 75\%.
 %However, if {the number of players is increased}, then better strategies
%exist.

Here is another way to describe the optimal 3-player strategy:
\begin{itemize}
\item specify  \bhat{5}-\bhat{5}-\bhat{5} and \ghat{5}-\ghat{5}-\ghat{5}
as {\it bad configurations}.
\item If a player's hat colour could result in a bad configuration, then that player
guesses {the opposite colour}. 
\item If a player's hat colour could not result in a bad configuration, then that player {passes}. 
\end{itemize}

{Strategies for more players are based on this idea of specifying certain
appropriately chosen {bad
configurations} and then using a similar strategy as in the 3-player game.}
The bad configurations are obtained using  {\it Hamming codes}, 
which are perfect single error correcting codes.
For every integer $m \geq 2$, there is a Hamming code of length $n = 2^m-1$ containing 
$2^{2^m-m-1} = 2^{n-m}$ 
{codewords}.
%(When $m=2$, there are two codewords: $(0,0,0)$ and $(1,1,1)$.)

In a Hamming code, every non-codeword can be changed into {exactly one}
codeword by changing one entry. (This property allows the 
Hamming code to correct {any single error} that occurs during transmission.)
If the configuration of hats is not a codeword, then there is a {unique position} $i$
such that changing entry $i$ creates a codeword. {Player $i$ will therefore guess
correctly and every other player will pass.}
If the configuration of hats is a codeword, then {everyone will guess incorrectly}.
Thus the group wins if and only if the configuration of hats is {not} a codeword.

Since there are $2^{n-m}$ codewords and $2^n$ configurations in total,
the success probability is $1 - 2^{-m} =
1 - 1/(n+1)$. It can be proven fairly easily that
this success probability is optimal, and can be attained only when a perfect
1-error correcting code exists. More generally, any strategy for this hat game on an arbitrary number $n$ of players is
``equivalent'' to a {\it covering code} of length $n$, 
and thus optimal strategies (for any number 
of players) are known if and only if optimal covering codes are known (see \cite{Lenstra}
for additional information).

\subsection{Hats-on-a-line}

Another popular hat game has $n$ players standing in a line. Hats of two colours
(gray and brown) are distributed randomly to each player. Each player
$P_i$ $(1 \leq i \leq n$) can only see the hats worn by 
players $P_{i+1}, \dots , P_n$ (i.e., the players ``ahead of'' $P_i$ in the line).
Each player is required to guess their hat colour, and they 
guess in the order $P_1, \dots , P_n$. The objective is to maximise the number of
correct guesses \cite{TOAD,Aravamuthan}. 

Clearly the first player's guess will be correct with probability 50\%, no matter
what her strategy is. However, a simple strategy can be devised in which 
players $P_2 , \dots , P_n$ always guess correctly by making use of information
gleaned from prior guesses.

As before, suppose that $0$ corresponds to \ghat{5} and $1$ corresponds to 
\bhat{5}. Let $c_i$ denote the colour of player $P_i$'s hat, $1 \leq i \leq n$.
Here is the strategy:
\begin{itemize}
\item
$P_1$ knows the values $c_2, \dots , c_n$ (she can see the hats belonging to
$P_2, \dots , P_n$).
$P_1$ provides as her guess the value \[g_1 = \sum_{i=2}^{n} c_i \bmod 2.\]
\item $P_2$ hears the value $g_1$ provided by $P_1$ and
$P_2$ knows the values $c_3, \dots , c_n$. Therefore $P_2$ can compute
\[ c_2 = g_1 - \sum _{i=3}^{n} c_i \bmod 2.\] 
$P_2$'s guess is $c_2$, which is correct.
\item For any player $P_j$ with $j \geq 2$, $P_j$
hears the values $g_1, c_2, \dots , c_{j-1}$ provided by $P_1 ,\dots , P_{j-1}$ respectively, and
$P_j$ knows the values $c_{j+1}, \dots , c_n$. Therefore $P_j$ can compute
\[c_j = g_1 - \sum _{i \in \{ 2, \dots , n\} \backslash \{j\}} c_i \bmod 2.\] 
$P_j$'s guess is $c_j$, which is correct.
\end{itemize}

It is not hard to see that the same strategy can be applied 
for an arbitrary number of colours, $q$, where $q > 1$.
The colours are named $0, \dots , q-1$ and all computations
are performed modulo $q$. If this is done, then $P_1$ has probability $1/q$ of 
guessing correctly, and the remaining $n-1$ players will always guess correctly.
Clearly this is optimal.
   
\subsection{Related Work}

A few years prior to the introduction of
Ebert's Hat Game, in 1994, a similar game was described by
Aspnes, Beigel, Furst  and Rudich \cite{ABFR}. In their version of
the game, players are not allowed to pass, and the objective is for a 
majority of the players to guess correctly. For the three-player game, it is easy
to describe a strategy that will succeed with probability $3/4$, just as in
Ebert's game:
\begin{itemize}
\item Alice votes the opposite of Bob's hat colour;
\item Bob votes the opposite of Charlie's hat colour; and
\item Charlie votes the opposite of Alice's hat colour.
\end{itemize}

This game is analysed in Table \ref{tab.2}, where the outcomes
for all
the possible configurations are listed.

\begin{table}[tb]
\caption{Analysis of the majority hat game for three players}
\label{tab.2}
\begin{center}
\begin{tabular}{ccc|ccc|c}
\multicolumn{3}{c|}{configuration} & \multicolumn{3}{c|}{guesses} & \multicolumn{1}{c}{outcome}\\ \hline
\rul{7}\bhat{5} &\bhat{5} & \bhat{5} & {\ghat{5}} & {\ghat{5}} & {\ghat{5}} & {{lose}}\\ \hline
\rul{7}\bhat{5} &\bhat{5} & \ghat{5} & {\ghat{5}} & {\bf \bhat{5}} & {\bf \ghat{5}} &  {{win}}\\ \hline
\rul{7}\bhat{5} &\ghat{5} & \bhat{5} & {\bf \bhat{5}} & {\bf \ghat{5}} & {\ghat{5}} & {{win}}\\ \hline
\rul{7}\bhat{5} &\ghat{5} & \ghat{5} & {\bf \bhat{5}} & {\bhat{5}} & {\bf \ghat{5}} & {{win}}\\ \hline
\rul{7}\ghat{5} &\bhat{5} & \bhat{5} & {\bf \ghat{5}} & {\ghat{5}} & {\bf \bhat{5}} & {{win}}\\ \hline
\rul{7}\ghat{5} &\bhat{5} & \ghat{5} & {\bf \ghat{5}} & {\bf \bhat{5}} & {\bhat{5}} &  {{win}}\\ \hline
\rul{7}\ghat{5} &\ghat{5} & \bhat{5} & {\bhat{5}} & {\bf \ghat{5}} & {\bf \bhat{5}} &  {{win}}\\ \hline
\rul{7}\ghat{5} &\ghat{5} & \ghat{5} & {\bhat{5}} & {\bhat{5}} & {\bhat{5}} & {{lose}}
\end{tabular}
\end{center}
\end{table}

It is also possible to devise a strategy for the majority hats game that uses Hamming
codes. %The winning probability in these cases will be the same as in Ebert's game. 
We basically follow the presentation from \cite{Buhler}.
The idea, which is due to Berlekamp, is to associate a strategy for $n$ players with 
an orientation of the edges of the $n$-dimensional cube $\{0,1\}^n$. Each player's
view corresponds in a natural way to an edge of the cube, and that player's guess
will be determined by the head of the edge, as specified by the orientation. 

If $n$ is a power of 2 minus 1, then there is Hamming code of length $n$.
Direct all the edges of the cube incident with a codeword away from
the codeword.  The remaining edges form an eulerian graph on the vertices that
are not codewords; these edges can be directed according to any eulerian circuit.

The number of correct guesses for a given configuration is equal to the indegree of the 
corresponding vertex. From this observation, it is not difficult to see that 
any codeword is a losing configuration for this strategy --- in fact, 
every guess will be incorrect.  If the configuration of hats is not a codeword, then
there will be $(n+1)/2$ correct guesses and $(n-1)/2$ incorrect guesses. So the success
probability is $1 - 1/(n+1)$, as in the Ebert hat game, and this can again be shown to be optimal.

Many other variations of the hat game have been proposed. We complete this section 
by briefly mentioning some of them.

\begin{itemize}
\item Hats could be distributed according to a non-uniform
probability distribution (\cite{Guo}).
\item 
Usually, it is  stipulated that each player gets a single guess as to his or her hat colour; however, allowing players to have multiple guesses 
has also been considered (\cite{ABFR}). 
\item When sequential responses are used, 
it may be the case that players can hear all the previous
responses (we call this {\it complete auditory information}), or only some of them,
as in \cite{Aravamuthan}.
\item Some games seek 
to guarantee that a certain minimum number of correct guesses are made, regardless of the configuration of hats, e.g., in an adversarial setting (\cite{ABFR,Winkler}).
\end{itemize}

In general, players' strategies can be deterministic or nondeterministic (randomized).
In the situation where hat distribution is done randomly, it suffices to consider only
deterministic strategies.  However, in an adversarial setting, an optimal strategy may
require randomization.

%\vspace{.25in}

\section{A New Hats-on-a-line Game}
\label{new.sec}

When the second author gave a talk to high school students about Ebert's Hat Game, 
one student asked
about sequential voting. It is attractive to consider sequential voting especially in 
the context of the Hats-on-a-line Game, but in that game the objective is different than in 
Ebert's game. A natural ``hybrid'' game  would allow sequential voting, but retain the same
objective as in Ebert's game. So we consider the following
new hats-on-a-line game specified as follows:
\begin{itemize}
\item hats of $q>1$ colours are distributed randomly;
\item visual information is restricted to the hats-on-a-line scenario;
\item sequential voting occurs in the order $P_1, \dots , P_n$ with abstentions allowed; and
%\item players have complete auditory information; and 
\item the objective is 
that at least one player guesses correctly and no player guesses incorrectly.
\end{itemize}
We'll call this game the {\bf New Hats-on-a-line Game}.

First, we observe that it is sufficient to consider strategies where only one
player makes a guess. If the first player to guess is incorrect, 
then any subsequent guesses 
are irrelevant because the players have already lost the game. On the other hand, 
if the first  player to guess is correct, then the players will win if all the
later players pass.

We consider the simple strategy presented in Table \ref{t-gray}, 
which we term the {\bf Gray Strategy}.
The Gray Strategy can be applied for any number of colours
(assuming that gray is one of the colours, of course!).

\begin{table}[tb]
\caption{The Gray Strategy}
\label{t-gray}
\begin{center}
\begin{tabular}{|p{4in}|}
\hline
Assume that gray is one of the hat colours. For each player $P_i$ ($1 \leq i \leq n$), 
when it is player $P_i$'s turn, if he can see at least one gray hat, he passes;
otherwise, he guesses ``gray''. \\ \hline
\end{tabular}
\end{center}
\end{table}

It is easy to analyse the success probability of 
the Gray Strategy: 

\begin{theorem}
\label{gray.thm}
The success probability of the Gray Strategy for the
New Hats-on-a-line Game with $q$ hat colours and $n$ players is 
$1 -  \left((q-1)/{q}\right)^{n}$.
\end{theorem}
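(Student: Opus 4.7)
The plan is to determine, for each possible hat configuration, exactly which player (if any) is the first to guess under the Gray Strategy, whether that guess is correct, and then to count the winning configurations.

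First I would observe that, using the reduction noted just before the strategy is introduced, it suffices to ensure that the first guess is correct: later players simply pass once the unique guess has been made. Under the Gray Strategy, the identity of the first guesser depends only on the hat configuration, because player $P_i$'s action (pass or guess ``gray'') depends only on the colours of hats $P_{i+1},\dots,P_n$.

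The key combinatorial observation is to split into two cases based on whether any hat is gray. If at least one hat is gray, let $g$ denote the \emph{largest} index with $c_g = \text{gray}$. Then for each $i < g$, player $P_i$ sees $P_g$'s gray hat among $P_{i+1},\dots,P_n$ and passes; while $P_g$ sees only non-gray hats ahead and so guesses ``gray'', which is correct. If no hat is gray, then $P_1$ sees no gray hat and guesses ``gray'' incorrectly, so the team loses. Hence the Gray Strategy wins on precisely those configurations containing at least one gray hat.

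To finish, I would compute the probability of this event. Since each hat is independently assigned a colour uniformly from $q$ possibilities, the probability that no hat is gray is $((q-1)/q)^n$, and the success probability is the complement, $1 - ((q-1)/q)^n$.

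No step is really an obstacle here; the only point requiring a moment's care is picking the \emph{largest} gray index (rather than, say, the smallest) so that $P_g$ really does see no gray hats ahead, and then noting that this makes all players with index $< g$ pass automatically.
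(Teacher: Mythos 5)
Your proposal is correct and follows essentially the same argument as the paper: both identify the last gray-hatted player as the one who guesses (correctly), note that everyone before passes, and conclude that the strategy loses exactly when the relevant hats are all non-gray. Your packaging of the two cases into the single event ``at least one hat is gray'' is an equivalent (and slightly tidier) way of organizing the same computation.
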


\begin{proof}
The probability that $P_1$ sees no gray hat
is $((q-1)/q)^{n-1}$. In this case, her guess of ``gray'' is correct with probability
$1/q$. If $P_1$ passes, then there is at least one gray hat among the remaining $n-1$ 
players. Let $j = \max \{ i: P_i \text{ has a gray hat}\}$. Then players
$P_1, \dots , P_{j-1}$ will pass and player $P_j$ will correctly guess ``gray''.
So the group wins if player $P_1$ passes. Overall, the probability of winning
is 
\[ \frac{1}{q} \times \left(\frac{q-1}{q}\right)^{n-1} + 1 \times \left( 1 -  \left(\frac{q-1}{q}\right)^{n-1} \right) =  1 -  \left(\frac{q-1}{q}\right)^{n}  .\]
\end{proof}

The main purpose of this section is to show that the Gray Strategy is an optimal strategy.
(By the term ``optimal'', we mean that the strategy has the maximum possible
probability of success, where the maximum is computed over  all possible 
strategies allowed by the game.)
We'll do two simple special cases before proceeding to the general proof.
(The proof of the general case is independent of these two proofs, but the
proofs of the special cases are still of interest due to their simplicity.)

We first show that the Gray Strategy is optimal if $q=2$. In this proof and
all other proofs in this section, we can restrict our attention without loss of generality 
to deterministic strategies.

\begin{theorem}
The maximum success probability for any strategy for the
New Hats-on-a-line Game with two hat colours and $n$ players is $1 -  2^{-n}$.
\end{theorem}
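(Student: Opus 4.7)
The plan is to prove, by induction on $n$, that every deterministic strategy has at least one losing configuration; combined with Theorem~\ref{gray.thm}, this yields the matching upper bound of $1-2^{-n}$ on the success probability. Before starting the induction, I would invoke two reductions already noted in the paper: we need only consider deterministic strategies, and we may assume that once some player guesses, all subsequent players pass. Under these assumptions, a strategy is just a tuple of functions $f_i\colon \{0,1\}^{n-i}\to\{\mathrm{pass},0,1\}$, where $f_i$ specifies $P_i$'s action as a function of his view $(c_{i+1},\dots,c_n)$. A configuration $c\in\{0,1\}^n$ is winning iff the smallest index $j$ with $f_j(c_{j+1},\dots,c_n)\neq\mathrm{pass}$ exists and satisfies $f_j(c_{j+1},\dots,c_n)=c_j$.

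The base case $n=1$ is immediate: $P_1$ sees nothing, so $f_1$ is a constant. If $f_1=\mathrm{pass}$ both configurations lose; if $f_1\in\{0,1\}$ then exactly one configuration is losing. Either way, success probability is at most $1/2=1-2^{-1}$.

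For the inductive step, fix a strategy $(f_1,\dots,f_n)$ and split into two cases. If there exists a view $v\in\{0,1\}^{n-1}$ with $f_1(v)\in\{0,1\}$, then $P_1$ makes the same guess on the two configurations $(0,v)$ and $(1,v)$, so the one with $c_1\neq f_1(v)$ is losing and we are done. Otherwise $P_1$ passes on every view; then the outcome on a configuration $(c_1,\dots,c_n)$ depends only on $(c_2,\dots,c_n)$, and $(f_2,\dots,f_n)$ constitutes an admissible $(n-1)$-player strategy. The inductive hypothesis supplies a losing sub-configuration $v^*\in\{0,1\}^{n-1}$, and then both $(0,v^*)$ and $(1,v^*)$ are losing in the $n$-player game. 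In either case a losing configuration exists, so the number of winning configurations is at most $2^n-1$, giving success probability at most $1-2^{-n}$.

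The only genuine point to watch is the legitimacy of the sub-game reduction in the second case: we must check that $(f_2,\dots,f_n)$ really is an $(n-1)$-player strategy in the prescribed form. This is painless because each $f_i$ is already written as a function of $P_i$'s view alone, with the implicit hypothesis "all previous players passed" being automatic as soon as $P_1$ always passes. No calculation or combinatorial ingenuity beyond this dichotomy is required; the argument is essentially a clean structural induction.
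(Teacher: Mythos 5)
Your proof is correct and follows essentially the same route as the paper's: induction on $n$ with a dichotomy on whether $P_1$ ever guesses, reducing to an $(n-1)$-player game when $P_1$ always passes. The only cosmetic difference is that you phrase the bound as ``every strategy has a losing configuration'' where the paper bounds the probability of an incorrect first guess by $2^{-n}$; these are the same count.
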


\begin{proof}
The proof is by induction on $n$. For $n=1$, the result is trivial, as
any guess by $P_1$ is correct with probability $1/2$. So we can assume $n > 1$.

%Without loss of generality, we assume that the strategy is deterministic.
Suppose there are $c$ configurations of $n-1$ hats for which player $P_1$
guesses a colour. We consider two cases:
\begin{description}
\item [case 1: $\mathbf{c \geq 1}$] \mbox{\quad}  \smallskip \\
There are $c$ cases where $P_1$'s guess is correct with
probability $1/2$. Therefore the probability of an incorrect guess by $P_1$ 
is
\[ \frac{1}{2} \times \frac{c}{2^{n-1}} \geq \frac{1}{2^n}.\] 
\item [case 2: $\mathbf{c = 0}$] \mbox{\quad} \smallskip \\
Since player $P_1$ always passes, the game reduces to an $(n-1)$-player game, 
in which the probability of winning is at most  $1 -  2^{-n+1}$, by induction.
\end{description}
Considering both cases, we see that the probability of winning is
at most $\max \{ 1 -  2^{-n}, 1 -  2^{-n+1}\} = 1 -  2^{-n}$.
\end{proof}

We observe that the above proof holds even when every player has complete
visual information, as the restricted visual information in the hats-on-a-line
model is not used in the proof. 

%It is an interesting problem to determine if this strategy is optimal
%when there are more than two colours of hats.  
We next prove optimality for
the two-player game for an arbitrary number of hat colours, as follows.
 
\begin{theorem}
The maximum success probability for any strategy for the
New Hats-on-a-line Game with $q$ hat colours and two players is 
\[1 -  \left( \frac{q-1}{q} \right)^{2} = \frac{2q-1}{q^2}.\]
\end{theorem}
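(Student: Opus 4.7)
The plan is to parameterise the deterministic strategies (it suffices, as the paper has noted, to consider deterministic ones) by the subset $S \subseteq \{0, 1, \ldots, q-1\}$ of colours for which $P_1$ elects to guess upon seeing that colour on $P_2$'s hat, together with a guess function $g: S \to \{0, \ldots, q-1\}$ and $P_2$'s response rule. A single incorrect guess loses the game and a single correct guess wins it provided everyone else passes, so it is enough to analyse the first player who guesses. I would show that the win probability depends essentially only on $s := |S|$, and then optimise over $s$ to match the Gray Strategy value $(2q-1)/q^2$ from Theorem~\ref{gray.thm}.

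For the first contribution, $P_1$ guesses correctly precisely when $c_2 \in S$ and $g(c_2) = c_1$. Since $c_1$ and $c_2$ are independent and uniform on $\{0, \ldots, q-1\}$, this event has probability
\[
\sum_{c \in S} \Pr[c_2 = c] \cdot \Pr[c_1 = g(c)] = \frac{s}{q^2},
\]
independent of the specific choice of $g$. For the second contribution (relevant only when $s < q$), I would argue that $P_1$'s pass conveys to $P_2$ exactly the information $c_2 \in T$, where $T := \{0, \ldots, q-1\} \setminus S$. By independence of $c_1$ and $c_2$, the posterior distribution of $c_2$ given $P_1$ passed is uniform on $T$, so $P_2$'s best response is to guess any fixed colour in $T$, succeeding with conditional probability $1/|T| = 1/(q-s)$. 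This yields a joint contribution of $(q-s)/q \cdot 1/(q-s) = 1/q$.

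Combining, the total success probability equals $s/q^2 + 1/q = (s+q)/q^2$ for $0 \leq s \leq q-1$, and $q/q^2 = 1/q$ for $s = q$. This is maximised at $s = q-1$, giving $(2q-1)/q^2$, which matches the lower bound supplied by the Gray Strategy. The only mildly subtle step is the Bayesian update for $P_2$: one needs to verify that after observing $P_1$'s pass, $P_2$'s own hat is genuinely uniform on $T$, so that no choice $P_2$ makes can beat $1/(q-s)$ conditional on $P_1$ passing. Every other step is a direct calculation, so this conditional-probability argument is the main (and essentially only) obstacle.
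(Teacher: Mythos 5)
Your proposal is correct and is essentially the paper's own argument: the paper likewise parameterises $P_1$'s strategy by the number $r$ of observed colours on which she guesses, computes the success probability as $r/q^2 + 1/q$ when $r<q$ (versus $1/q$ when $r=q$), and maximises at $r=q-1$. The conditional-uniformity observation you flag as the subtle step is stated in the paper in exactly the same way ($P_2$'s hat is one of $q-r$ equally likely colours given that $P_1$ passes), so there is nothing further to add.
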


\begin{proof}
Suppose that player $P_1$ guesses her hat colour for $r$ out of the 
$q$ possible colours for 
$P_2$'s hat that
she might see. Any guess she makes is correct with probability $1/q$. 

We distinguish two cases:
\begin{description}
\item [case 1: $\mathbf{r=q}$] \mbox{\quad}  \smallskip \\
If $r = q$,
then the overall success probability is $1/q$. 
\item [case 2: $\mathbf{r<q}$] \mbox{\quad}  \smallskip \\
In this case,  player
$P_1$ passes with probability $(q-r)/q$. Given that $P_1$ passes, $P_2$ knows
that his hat is one of $q-r$ equally possible colours, so his guess will be correct with
probability $1 / (q-r)$.  Therefore the overall success probability is
\[\frac{1}{q} \times \frac{r}{q}  + \frac{1}{q-r} \times \frac{q-r}{q}  = 
\frac{r}{q^2}  + \frac{1}{q}.\]
To maximise this quantity, we take $r = q-1$. This yields a success probability of
$(2q-1)/q^2$. 
\end{description}
%Considering both cases, 
Case 2 yields the optimal strategy 
because $(2q-1)/q^2 > 1/q$ when $q > 1$. 
\end{proof}

\subsection{The Main Theorem}

Based on the partial results proven above, it is tempting to 
conjecture that the maximum success strategy is 
$1 -  \left((q-1)/{q}\right)^{n}$,
for any integers $n>1$ and $q>1$.
In fact, we will prove that this is always the case.

The proof is done in two steps. 
A strategy is defined to be \emph{restricted} if
the any guess made by any player other than the first player
is always correct.  First, we show that any optimal 
strategy must be a restricted strategy.
Then we prove optimality of the Gray Strategy by considering only 
restricted strategies.

In all of our proofs, we denote the colour of $P_i$'s hat by $c_i$, $1 \leq i \leq n$.
The $n$-tuple $(c_1, \dots , c_n)$ is the {\it configuration} of hats.

\begin{lemma} 
\label{restricted.lem}
Any optimal strategy for the New Hats-on-a-line Game is a restricted strategy.
\end{lemma}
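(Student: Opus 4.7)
The plan is to argue by contrapositive: if $S$ is not restricted, then there is a strategy $S'$ with strictly higher success probability, so $S$ is not optimal. Assume some $P_k$ with $k\ge 2$ guesses colour $\gamma$ at an observation $O$ (the visible hats $c_{k+1},\dots,c_n$ together with the prior actions of $P_1,\dots,P_{k-1}$) and that at least one configuration consistent with $O$ has $c_k\ne \gamma$. Let $C_O$ be the set of configurations consistent with $O$ and set $C_O^\gamma = \{\vec c\in C_O : c_k = \gamma\}$. By the preliminary observation made in Section~\ref{new.sec}, I may assume that in $S$ only $P_k$ guesses on configurations in $C_O$, so the conditional win probability of $S$ given $O$ is $p := |C_O^\gamma|/|C_O| < 1$.

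The key observation is that every configuration in $C_O$ shares the same values $c_{k+1},\dots,c_n$ (seen by $P_k$) and the same preceding history. Hence if $P_k$ passes at $O$ under any alternative strategy, the later players $P_{k+1},\dots,P_n$ receive identical observations and take identical actions on every configuration of $C_O$, so the outcome on $C_O$ is uniform (all win or all lose).

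I would then construct $S'$ by modifying $S$ only at the sequence of observations arising from this new ``all-passes'' trajectory. Specifically, in $S'$: $P_k$ passes at $O$; each of $P_{k+1},\dots,P_{n-1}$ passes at the observation $O^{(j)}$ met along the trajectory; and $P_n$ guesses the value $c_n^{(O)}$ of $c_n$ forced by $O$ at the terminal observation $O^{(n)}$. Under $S'$ every configuration in $C_O$ traverses this chain and wins (since $c_n = c_n^{(O)}$ on $C_O$), giving $|C_O|$ wins from $C_O$ versus only $p\,|C_O|$ under $S$.

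The main obstacle I anticipate is verifying that these modifications do not cost wins on configurations outside $C_O$ that happen to meet one of the new observations $O^{(j)}$. Any such $\vec c\notin C_O$ that meets some $O^{(j)}$ in $S$ must have $c_{k+2},\dots,c_n$ equal to the corresponding values in $O$ (so that later players' visible hats agree with $O^{(j)}$) and must have $P_k$ passing in $S$ on $\vec c$ (so the trajectory even reaches $O^{(k+1)}$); in particular $c_n = c_n^{(O)}$ on every such $\vec c$, so $P_n$'s modified guess is correct at $O^{(n)}$. Tracing the $S'$-trajectory of $\vec c$ shows that it too arrives at $O^{(n)}$ and is won under $S'$, so on every such $\vec c$ the strategy $S'$ wins, which is at least as many wins among these configurations as $S$ achieves. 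Adding the $|C_O|-|C_O^\gamma|>0$ net gain on $C_O$ gives $W(S')>W(S)$, contradicting the optimality of $S$.
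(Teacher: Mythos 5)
Your overall plan (if $S$ is not restricted, locally modify it to gain probability) matches the paper's, but you push the offending guess \emph{later} down the line, whereas the paper pushes it \emph{earlier} to $P_1$, and this difference is fatal to your argument. The weak link is the claim that every configuration $\vec c\notin C_O$ meeting one of the modified observations must satisfy $c_n=c_n^{(O)}$. That is true of $O^{(k+1)},\dots,O^{(n-1)}$, because $P_j$ with $j\le n-1$ actually sees $c_n$; but $P_n$ sees no hats at all, so the terminal observation $O^{(n)}$ consists only of the action history and places no constraint on $c_n$. A configuration can arrive at $O^{(n)}$ ``from the side'' (with $P_k,\dots,P_{n-1}$ passing at observations other than $O,O^{(k+1)},\dots,O^{(n-1)}$) while having $c_n\ne c_n^{(O)}$, and your redefinition of $P_n$'s action at $O^{(n)}$ can then destroy more wins than it creates. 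Concretely, take $n=3$, $q=2$: $P_1$ always passes; $P_2$ guesses $0$ if she sees $c_3=0$ and passes if $c_3=1$; $P_3$, upon hearing two passes, guesses $1$. This wins $6$ of the $8$ configurations and is not restricted ($P_2$ can be wrong); here $O$ is ``$c_3=0$ after $P_1$ passes,'' $C_O$ is the four configurations with $c_3=0$, and $c_3^{(O)}=0$. Your $S'$ makes $P_2$ always pass and $P_3$ guess $0$ after two passes; but the four configurations with $c_3=1$ also reach that terminal observation and are now all lost, so $S'$ wins only $4$ of $8$. (Your construction also breaks down when the offending guesser is $P_n$ itself, since there is then no later player and no hat value forced by $O$.)

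The paper avoids all of this by modifying $P_1$ instead: since $P_1$ sees $(c_2,\dots,c_n)$ and, conditional on her passing, the entire outcome of the game is a function of $(c_2,\dots,c_n)$ alone, she can compute exactly the set $F$ of tuples on which some later player would guess incorrectly, and she simply guesses an arbitrary colour whenever she sees a tuple in $F$. This turns each certain loss into a win with probability $1/q$ and changes nothing outside $F$, giving the strict improvement you were after. If you want to keep the ``reroute to a later player'' idea, you would have to arrange for the history to communicate $P_j$'s own colour to him, which amounts to redesigning the strategy globally (this is what the Gray Strategy does) rather than making a local modification.
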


\begin{proof}
Suppose $\mathcal{S}$ is an optimal strategy for the New Hats-on-a-line Game
that is not restricted.
If player $P_1$ passes, then the outcome of the game is determined by the 
$(n-1)$-tuple $(c_2, \dots , c_n)$, which is known to $P_1$. Since $P_1$ knows
the strategies of all the players, she can determine exactly which $(n-1)$-tuples
will lead to incorrect guesses by a later player. Denote this 
set of $(n-1)$-tuples by $F$. Because $\mathcal{S}$ is not restricted,
it follows that $F \neq \emptyset$. 

We create a new strategy 
$\mathcal{S}'$ by modifying $\mathcal{S}$ as follows:
\begin{enumerate}
\item If $(c_2, \dots , c_n) \in F$, then $P_1$ guesses an arbitrary colour (e.g., $P_1$ 
could guess ``gray'').
\item If $(c_2, \dots , c_n) \not\in F$, then proceed as in $\mathcal{S}$.
\end{enumerate}
It is easy to see that $\mathcal{S}'$ is a restricted strategy.
The strategies $\mathcal{S}$ and $\mathcal{S}'$ differ only in what 
happens for configurations $(c_1, \dots , c_n)$ where $(c_2, \dots , c_n) \in F$.
When $(c_2, \dots , c_n) \in F$, $\mathcal{S}'$ will guess correctly with probability
$1/q$. On the other hand, $\mathcal{S}$ always results in an incorrect guess when 
$(c_2, \dots , c_n) \in F$.
Because $|F| > 1$, the success probability of $\mathcal{S}'$
is greater than the
success probability of $\mathcal{S}$. This contradicts the optimality of
$\mathcal{S}$ and the desired result follows. 
\end{proof}

Now we proceed to the second part of the proof.

\begin{lemma}
\label{restr-prob.lem}
The maximum success probability for any restricted strategy for the
New Hats-on-a-line Game with $q$ hat colours and $n$ players is 
$1 -  \left((q-1)/{q}\right)^{n}$.
\end{lemma}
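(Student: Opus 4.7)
The plan is to induct on $n$. The base case $n=1$ is immediate: player $P_1$ can either pass (losing) or guess a fixed colour (correct with probability $1/q$), matching the claimed bound $1 - (q-1)/q$. For the inductive step, I fix an arbitrary restricted strategy $\mathcal{S}$ and partition the tails $[q]^{n-1}$ into the set $B$ of tails $(c_2,\ldots,c_n)$ for which $P_1$ guesses and its complement $A$ for which $P_1$ passes. I count winning configurations $(c_1,\ldots,c_n)$ in two parts. Each tail in $B$ contributes exactly one winning configuration, namely the unique $c_1$ matching $P_1$'s guess, for a total of $|B|$ wins. For each tail in $A$, the subgame played by $P_2,\ldots,P_n$ has a deterministic outcome depending only on $(c_2,\ldots,c_n)$; writing $A_w \subseteq A$ for the set of tails on which the subgame wins, every value of $c_1$ yields a win, contributing $q|A_w|$ further wins. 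Hence the success probability of $\mathcal{S}$ equals $(|B| + q|A_w|)/q^n$.

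To bound $|A_w|$, I would introduce a phantom $(n-1)$-player strategy $\mathcal{S}^{\ast}$ obtained by forcing $P_1$ to always pass and letting $P_2,\ldots,P_n$ play exactly as in $\mathcal{S}$ under the history ``$P_1$ passed''. Since every guess by $P_2,\ldots,P_n$ in $\mathcal{S}$ is correct, $\mathcal{S}^{\ast}$ is itself a restricted $(n-1)$-player strategy, so the induction hypothesis yields $|A_w^{\prime}| \leq q^{n-1} - (q-1)^{n-1}$, where $A_w^{\prime} \subseteq [q]^{n-1}$ is the success set of $\mathcal{S}^{\ast}$. On tails in $A$ the two strategies execute identically after $P_1$'s pass, so $A_w = A_w^{\prime} \cap A \subseteq A_w^{\prime}$, and therefore $|A_w| \leq q^{n-1} - (q-1)^{n-1}$. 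Combined with the trivial bound $|A_w| \leq |A| = q^{n-1} - |B|$, the quantity $|B| + q|A_w|$ is piecewise-linear in $|B|$ with the two regimes meeting at $|B| = (q-1)^{n-1}$ with common value $q^n - (q-1)^n$; one checks by inspection that this is indeed the maximum. Dividing by $q^n$ gives the desired bound $1 - ((q-1)/q)^n$ on the success probability.

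The main obstacle, and essentially the only place where the restricted hypothesis is used, is setting up $\mathcal{S}^{\ast}$ correctly: one must verify that it is a legitimate restricted $(n-1)$-player strategy (noting that on tails in $B$ the strategies $\mathcal{S}$ and $\mathcal{S}^{\ast}$ may diverge, because in $\mathcal{S}$ the players $P_2,\ldots,P_n$ never actually get to act there) and that $A_w \subseteq A_w^{\prime}$. Once this reduction is in place, the concluding two-constraint maximisation is routine.
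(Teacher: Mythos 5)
Your argument is essentially correct and takes a genuinely different route from the paper's. The paper fixes an \emph{optimal} restricted strategy, tracks three sets (tails where $P_1$ guesses, where $P_1$ passes and $P_2$ guesses, where both pass), projects them to multisets of $(n-2)$-tuples, and runs the induction on the auxiliary quantity $\beta_n$, the maximum number of tails on which the first player passes; it needs optimality of $\mathcal{S}$ twice (to show $A'\cap C'=\emptyset$ and to write $|B|+|C|=\beta_n$) and uses the observation that each tail on which $P_2$ guesses forces $q-1$ companion tails on which $P_1$ must guess. You instead bound an \emph{arbitrary} restricted strategy directly, counting winning configurations as $|B|+q|A_w|$ and closing the induction with a two-constraint maximisation. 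This is cleaner and arguably more robust, and it dispenses with the paper's multiset bookkeeping.

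The one place that needs repair is exactly the step you flagged, and the repair is not automatic: as literally defined, $\mathcal{S}^{\ast}$ need \emph{not} be restricted. The hypothesis that $\mathcal{S}$ is restricted only constrains guesses made in realized plays of $\mathcal{S}$. If a suffix $(c_3,\dots,c_n)$ is such that $P_1$ guesses for \emph{every} value of $c_2$, then in $\mathcal{S}$ the history ``$P_1$ passed'' never occurs with that suffix, so the behaviour of $P_2,\dots,P_n$ under that history is completely unconstrained, and in $\mathcal{S}^{\ast}$ some $P_j$ with $j\geq 3$ could guess incorrectly there. The fix is short: redefine $\mathcal{S}^{\ast}$ so that $P_2$ guesses (arbitrarily) on every such ``orphaned'' suffix. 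Since no tail in $A$ has an orphaned suffix, this modification does not affect the play on $A$, so $A_w = A_w'\cap A$ survives; and for any non-orphaned suffix on which $P_2$ passes there exists some $c_2'$ with $(c_2',c_3,\dots,c_n)\in A$, whence the play of $\mathcal{S}^{\ast}$ on any tail with that suffix coincides, position by position, with a realized play of $\mathcal{S}$ in which $P_1$ passes (players $P_3,\dots,P_n$ see and hear the same things, and $c_3,\dots,c_n$ agree), so their guesses are correct by restrictedness of $\mathcal{S}$. This is in effect what the paper's own construction does by forcing $P_2$ to guess on all of $A'\cup B'$. With that patch inserted, your induction and the concluding optimisation at $|B|=(q-1)^{n-1}$ are correct.
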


\begin{proof}
Suppose an optimal restricted strategy $\mathcal{S}$ is being used.
 %From Lemma \ref{restricted.lem}, we can assume that $\mathcal{S}$ is a restricted strategy.
Let $A$ denote the set of $(n-1)$-tuples $(c_2, \dots , c_n)$ for which $P_1$ guesses;
let $B$ denote the set of $(n-1)$-tuples for which $P_1$ passes and $P_2$ guesses (correctly);
and let $C$ denote the set of $(n-1)$-tuples for which $P_1$ and $P_2$ both  pass.
Clearly every $(n-1)$-tuple is in exactly one of $A$, $B$, or $C$, so
\begin{equation}
\label{eq1} |A| + |B| + |C| = q^{n-1}.
\end{equation}

Now construct $A'$ ($B'$, $C'$, resp.) from $A$ ($B$, $C$, resp.) by deleting the first
co-ordinate (i.e., the value $c_2$) from each $(n-1)$-tuple. $A'$, $B'$ and $C'$ are treated as multisets.
We make some simple observations:
\begin{description}
\item [(i)] $B' \cap C' = \emptyset$. This beacuse $P_2$'s strategy is determined by
the $(n-2)$-tuple $(c_3, \dots , c_n)$.
\item [(ii)] For each $(c_3, \dots , c_n) \in B'$, there are precisely $q-1$ occurrences
of $(c_3, \dots , c_n) \in A'$.  This follows because player $P_2$ can be 
guaranteed to guess correctly
only when his hat colour is determined uniquely. 
\item [(iii)] $A' \cap C' = \emptyset$. This follows from the optimality of the strategy $\mathcal{S}$.
(The existence of an $(n-1)$-tuple $(c_2, \dots , c_n) \in A$ such that $(c_3, \dots , c_n) \in C'$ contradicts the optimality of $\mathcal{S}$:
$P_1$ should pass, for this configuration will eventually lead to a correct guess by a later
player.)
\end{description}
We now define a restricted 
strategy $\mathcal{S'}$ for the 
$(n-1)$-player game with players $P_2, \dots , P_n$ (here $P_2$ is the ``first'' player).
The strategy is obtained by modifying $\mathcal{S}$, as follows:
\begin{enumerate}
\item $P_2$ guesses (arbitrarily) if $(c_3, \dots , c_n) \in A' \cup B'$ and 
$P_2$ passes if $(c_3, \dots , c_n) \in C'$. (This is well-defined in view of the three
preceding observations.)
\item $P_3, \dots , P_n$ proceed exactly as in strategy $\mathcal{S}$.
\end{enumerate}
Since the set of $(n-2)$-tuples for which $P_2$ passes is the same in both of
strategies $\mathcal{S}$ and $\mathcal{S'}$, it follows that $P_3, \dots , P_n$
only make correct guesses in  $\mathcal{S'}$, and therefore  $\mathcal{S'}$ is restricted.

Let $\beta_n$ denote the maximum number of $(n-1)$-tuples for which the first player
passes in an optimal restricted strategy. We will prove that
\begin{equation}
\label{eq5}
\beta_n \leq q^{n-1} - (q-1)^{n-1}.
\end{equation}
This is true for $n=2$, since $\beta_2 \leq 1$.

Now we proceed by induction on $n$. 
We will use a few equations and inequalities.
First, from (ii), it is clear that
\begin{equation}
\label{eq2} |A| \geq (q-1)|B|.
\end{equation}
Next, because $\mathcal{S'}$ is a restricted strategy for $n-1$ players, we have 
\begin{equation}
\label{eq3} |C| \leq q \beta_{n-1}.
\end{equation}
Finally, from the optimality of $\mathcal{S}$, it must be the case that
\begin{equation}
\label{eq4} |B| + |C| = \beta_n.
\end{equation}
Applying (\ref{eq1}), (\ref{eq2}), (\ref{eq3}) and (\ref{eq4}), we have
\begin{eqnarray*}
\beta_n & = & |B| + |C| \\
& = & q^{n-1} - |A| \\
& \leq & q^{n-1} - (q-1)|B| \\
&=& q^{n-1} - (q-1)(\beta_n - |C|)\\
& \leq & q^{n-1} - (q-1)\beta_n + q(q-1)\beta_{n-1},
\end{eqnarray*}
from which we obtain
\[ \beta_n  \leq  q^{n-2} + (q-1) \beta_{n-1}. \]
Applying the induction assumption, we see that
\[ \beta_n  \leq  q^{n-2} + (q-1) (q^{n-2} - (q-1)^{n-2}) = q^{n-1} - (q-1)^{n-1},\] 
showing that (\ref{eq5}) is true.

Finally, using (\ref{eq5}), the success probability of $\mathcal{S}$ is computed to be
\begin{eqnarray*}
 \lefteqn{\mathrm{Pr}[P_1 \text{ passes}] + \frac{1}{q} \times \mathrm{Pr}[P_1 \text{ guesses}]}\\
&=&  \mathrm{Pr}[P_1 \text{ passes}] + \frac{1}{q} \times ( 1- \mathrm{Pr}[P_1 \text{ passes}])\\
&=&  \frac{1}{q} + \mathrm{Pr}[P_1 \text{ passes}]  \times \left( 1- \frac{1}{q} \right)\\
&\leq & \frac{1}{q}  + \frac{\beta_n}{q^{n-1}} \times \left( 1- \frac{1}{q} \right) \\
&\leq & \frac{1}{q} +  \left( \frac{q^{n-1} - (q-1)^{n-1}}{q^{n-1}} \right) \times \left( 1- \frac{1}{q} \right) \\
&=& 1 - \left(\frac{q-1}{q} \right)^n.
 \end{eqnarray*}
\end{proof}

Summarizing, we have proven our main theorem.

\begin{theorem}
The Gray Strategy for the
New Hats-on-a-line Game with $q$ hat colours and $n$ players is 
optimal.
\end{theorem}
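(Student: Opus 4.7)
The plan is essentially to observe that the three prior results already assembled in this section combine immediately to yield the theorem, so the proof will be a brief synthesis rather than a new argument.

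First I would invoke Lemma \ref{restricted.lem} to restrict attention to restricted strategies: any optimal strategy for the New Hats-on-a-line Game is restricted, so the maximum success probability over all strategies equals the maximum success probability over restricted strategies. Then I would apply Lemma \ref{restr-prob.lem}, which bounds the latter quantity by $1 - ((q-1)/q)^n$. Finally, Theorem \ref{gray.thm} shows that the Gray Strategy attains this bound exactly. Chaining these three facts gives that the Gray Strategy matches the optimum, hence is optimal.

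There is no real obstacle here, since the hard combinatorial work has already been done in Lemma \ref{restr-prob.lem} (the inductive bound $\beta_n \leq q^{n-1} - (q-1)^{n-1}$ on the number of passing configurations) and the reduction to restricted strategies has been handled in Lemma \ref{restricted.lem}. The only thing to double-check is that the Gray Strategy is in fact itself a restricted strategy, which is immediate from its description: if $P_1$ passes, then some later $P_j$ sees no gray hat ahead and thus guesses ``gray'' correctly (indeed, taking $j$ to be the largest index with a gray hat guarantees correctness), so no player other than $P_1$ ever guesses incorrectly. Thus the bound of Lemma \ref{restr-prob.lem} is tight and achieved by the Gray Strategy, completing the proof.
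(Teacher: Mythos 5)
Your proposal is correct and is exactly the paper's argument: the theorem is stated there as an immediate consequence of Theorem \ref{gray.thm} together with Lemmas \ref{restricted.lem} and \ref{restr-prob.lem}, chained in the same way you describe. Your added observation that the Gray Strategy is itself restricted is harmless but not needed, since the two lemmas already bound every strategy by $1-\left((q-1)/q\right)^n$ and Theorem \ref{gray.thm} shows the Gray Strategy attains it.
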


\begin{proof}
This is an immediate consequence of 
Theorem \ref{gray.thm} and Lemmas \ref{restricted.lem} and 
\ref{restr-prob.lem}.
\end{proof}

\section{Comments}
\label{comments.sec}

It is interesting to compare Ebert's Hat Game, the Hats-on-a-line Game and 
the New Hats-on-a-line Game. The optimal solutions to Ebert's game are easily 
shown to be  equivalent to 
covering codes.  There are many open problems 
concerning these combinatorial structures, so the
optimal solution to Ebert's game is not known in general. 
The optimal solution to the Hats-on-a-line Game is a simple
arithmetic strategy, and it is obvious that the strategy is optimal.
We have introduced the New Hats-on-a-line Game as a hybrid of the two preceding
games. The optimal strategy is very simple, but the proof of optimality
is rather delicate combinatorial proof by induction. This game does not 
seem to have any connection to combinatorial structures such as covering codes.
The analysis of these three games utilize different techniques. At the present 
time, there does not
appear to be any kind of unified approach that is appropriate for 
understanding these games 
and/or other types of hat games.


\begin{thebibliography}{10}

\bibitem{ABFR}
J.\ Aspnes, R.\ Beigel, M.\ Furst  and S.\ Rudich,
{\it The Expressive Power of Voting Polynomials},
Combinatorica {\bf 14} (1994), 135--148.

\bibitem{Aravamuthan}
Sarang Aravamuthan and Sachin Lodha, 
{\it Covering Codes for Hats-on-a-line},
The Electronic Journal of Combinatorics {\bf 13} (2006), \#R21. 

\bibitem{TOAD}
Tom Bohman, Oleg Pikhurko, Alan Frieze and Danny Sleator,
{\it Puzzle 15: Hat Problems}, 
The Puzzle Toad. 
{\tt http://www.cs.cmu.edu/puzzle/puzzle15.html}

\bibitem{Buhler}
J.\ P.\ Buhler, {\it Hat Tricks},  Mathematical Intelligencer, {\bf 24(4)} (2002), 44--49.

\bibitem{Ebert}  
Todd Ebert, {\it Applications of Recursive Operators to Randomness and Complexity}, PhD Thesis,  University of California Santa Barbara, 1998.



\bibitem{EV} T.\ Ebert and H.\ Vollmer, {\it On the Autoreducibility of Random Sequences},  Proc. 25th International
Symposium on Mathematical Foundations of Computer Science, Lecture Notes in Computer
Science, {\bf 1893} (2000), 333--342.

\bibitem{Guo} W.\ Guo, S.\ Kasala, M.\ Bhaskara Rao and B.\ Tucker, {\it The Hat Problem and Some Variations}, in ``Advances in Distribution Theory, Order Statistics, and Inference'', Springer, 2006, pp.\ 459--479.


\bibitem{Lenstra}
Hendrik W. Lenstra and Gadiel Seroussi, {\it On Hats and Other Covers (extended summary)}.
%Sept.\ 15, 2005.
{\tt http://arxiv.org/pdf/cs/0509045}




\bibitem{Robinson}
Sara Robinson, {\it Why Mathematicians Now Care About Their Hat Color}, 
New York Times, April 10, 2001. 
{\tt http://www.nytimes.com/2001/04/10/science/10MATH.html}

\bibitem{Winkler} Peter Winkler, {\it Games People Don't Play}, in ``Puzzlers' Tribute: A Feast for the Mind'', A.\ K.\ Peters, 2002, pp.\ 301--313.



\end{thebibliography}
\end{document}